\let\dim\relax
\DeclareMathOperator{\dim}{dim}
\DeclareMathOperator{\tr}{tr}
\let\im\relax
\DeclareMathOperator{\im}{im}
\DeclareMathOperator{\coker}{coker}
\DeclareMathOperator{\Ext}{Ext}
\DeclareMathOperator{\Tor}{Tor}
\DeclareMathOperator{\Hom}{Hom}
\DeclareMathOperator{\id}{id}
\DeclareMathOperator{\Aut}{Aut}
\newcommand{\gen}[1]{\left\langle#1\right\rangle} 
\newcommand{\then}{\Rightarrow}
\renewcommand{\iff}{\Leftrightarrow}
\newcommand{\Iff}{\Longleftrightarrow}
\newcommand{\zz}{\mathbb{Z}}
\let\ov\overline
\newcommand{\ova}{\ov{a}}
\DeclareMathOperator{\cd}{cd}
\newtheorem{lemma}{Lemma}[section]
\newtheorem{theorem}[lemma]{Theorem}
\newtheorem{proposition}[lemma]{Proposition}
\newenvironment{proof}{\medskip\noindent\textit{Proof: \/}}
               {\begin{flushright}\rule{2mm}{2mm}\end{flushright}\par\medskip}
\newenvironment{remark}{\medskip\noindent\textbf{Remark: }}
               {\par\bigskip}
\newenvironment{example}{\medskip\noindent\textbf{Example: }}
               {\par\bigskip}
\begin{document}

\title{The cohomology ring of the sapphires that admit the Sol geometry}
\author{S\'ergio Tadao Martins\footnote{Electronic address: \texttt{sergio.tadao.martins@ufsc.br}. The author has received financial support from FAPESP, processes 2013/07510-4 and 2013/21394-7}}
\affil{Department of Mathematics, Federal University of Santa Catarina}
\author{Daciberg Lima Gon\c calves\footnote{Electronic address: \texttt{dlgoncal@ime.usp.br}}}
\affil{Institute of Mathematics and Statistics, University of S\~ao Paulo}
\date{\today}
\maketitle

\begin{abstract}
{\let\thefootnote\relax\footnote{{\it Mathematics Subject Classification
    2010:} primary: 20J06; secondary: 57M05; 57R19}}
{\let\thefootnote\relax\footnote{{\it Keywords: } torus semi-bundles;
    sapphires; Sol$3$-manifolds; finite free resolution; diagonal
    approximation; cohomology ring}}
Let $G$ be the fundamental group of a sapphire that admits the
\emph{Sol\/} geometry and is not a torus bundle. We determine a finite
free resolution of $\zz$ over $\zz G$ and calculate a partial diagonal
approximation for this resolution. We also compute the cohomology
rings $H^*(G;A)$ for $A=\zz$ and $A=\zz_p$ for an odd prime $p$, and
indicate how to compute the groups $H^*(G;A)$ and the multiplicative
structure given by the cup product for any system of coefficients $A$.

%
\end{abstract}

\section{Introduction}

In this paper we consider the problem of calculating the cohomology
ring of $3$-manifolds that admit the {\it Sol\/} geometry. The
$3$-manifolds that admit this geometry are either {\it torus bundles}
or {\it torus semi-bundles}. In~\cite{Martins2013} we considered the
problem of determining the cohomology ring of the torus bundles, so
we now focus on the cohomology of torus semi-bundles.

A torus semi-bundle (also called a {\it sapphire}) is an orientable
$3$-manifold obtained from two twisted $I$-bundles $K_1I$ and $K_2I$
over the Klein bottle by identifying their boundaries, as described
in~\cite{Morimoto}. Using the notation from that article, let $X =
K(r,s,t,u)$ be the sapphire obtained when we identify the boundaries
by a homeomorphism $h\colon \partial K_2I \to \partial K_1I$ such that
the induced isomorphism $h_*\colon \pi_1(\partial K_2I) \to
\pi_1(\partial K_1I)$ is given by $h_*(\alpha_2) = r\alpha_1+s\beta_1$
and $h_*(\beta_2) = t\alpha_1+u\beta_1$, where the meanings of the
generators $\alpha_i$ and $\beta_i$ for $i=1,\,2$ are explained
in~\cite{Morimoto}. If we let $G=\pi_1(X)$, then the space $X$ is a
$K(G,1)$ space, so its cohomology coincides with the cohomology of
$G$.

Morimoto proved in~\cite{Morimoto} that $X$ is a torus bundle over
$S^1$ if, and only if, $t=0$. In~\cite{Martins2013}, a finite free
resolution of $G$ was constructed and also a diagonal approximation
for that resolution was given when $X$ is a torus bundle, and with
those ingredients the cohomology rings $H^*(G;\zz)$ and $H^*(G;\zz_p)$
for a prime $p$ were computed. Moreover, the results obtained
in~\cite{Martins2013} also allow us to calculate the cup product
$H^p(G;M) \otimes H^q(G;N) \to H^{p+q}(G;M\otimes N)$ for nontrivial
coefficients $M$ and $N$. As we are interested in the cohomology ring
of $G$, we can exclude the case where $X$ is a torus bundle over
$S^1$, so from now on we assume $t\ne 0$. Moreover, if the sapphire
$X=K(r,s,t,u)$ admits the $Sol$ geometry and is not a torus bundle,
then we actually have $rstu\ne 0$ (see~\cite{SunWangWu}), so that's
what we assume from now on.

The rest of the paper is organized as follows: in
Section~\ref{section:G} we exhibit some properties of the group $G$,
including some normal forms for its elements, and of the group ring
$\zz G$. In Section~\ref{section:freeresolution}, we construct a
finite free resolution $F$ of $\zz$ over $\zz G$, determine a partial
diagonal approximation for $F$ and discuss how to compute cup products
for arbitrary systems of coefficients. In
Section~\ref{section:examples} we show some examples of computations
for the cohomology of $G$ with twisted integer coefficients and also
determine the ring $H^*(G;\zz_p)$ when $p$ is an odd prime.

\section{The group $G$ and the group ring $\zz G$}\label{section:G}

Morimoto showed~\cite{Morimoto} that the fundamental group $G$ of the
torus semi-bundle $X = K(r,s,t,u)$ is given by the presentation
\begin{equation}\label{eq:Gpresentation}
G = \gen{a_1,b_1,a_2 \mid a_1b_1a_1^{-1} = b_1^{-1}, \: a_2^2 = a_1^{2r}b_1^s, \: a_2a_1^{2t}b_1^ua_2^{-1} = b_1^{-u}a_1^{-2t}}
\end{equation}
and from that he also got
\[
H_1(G;\zz) = \begin{cases}
\zz_{4t}\oplus \zz_2\oplus \zz_2, & \text{if $s$ is even,} \\
\zz_{4t}\oplus \zz_4, & \text{if $s$ is odd.} \\
\end{cases}
\]

Since we are assuming that $t\ne 0$, $H_1(G;\zz)$ is finite and it
follows from the universal coefficient theorem for cohomology that
$H^1(G;\zz) = 0$. The computation of $H^2(G;\zz)$ follows from
Poincar\'e duality (remembering that $X$ is orientable), which also
gives us $H^3(G;\zz) = \zz$. We state this result formally in the next
theorem for future reference.

\begin{theorem}\label{theorem:integralcohomology}
The cohomology groups $H^*(G;\zz)$ are given by
\begin{align*}
H^0(G;\zz) &= \zz, \\
H^1(G;\zz) &= 0, \\
H^2(G;\zz) &= \begin{cases}
\zz_{4t}\oplus \zz_2\oplus \zz_2, & \text{if $s$ is even,} \\
\zz_{4t}\oplus \zz_4, & \text{if $s$ is odd,} \\
\end{cases}\\
H^3(G;\zz) &= \zz.
\end{align*}
\end{theorem}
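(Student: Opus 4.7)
The plan is essentially to assemble pieces already stated in the excerpt. Since $X$ is a closed orientable $3$-manifold which is a $K(G,1)$, we can identify $H^{\ast}(G;\zz)$ with $H^{\ast}(X;\zz)$ and use Poincar\'e duality freely.

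First I would dispatch the easy cases. The identification $H^0(G;\zz)=\zz$ is immediate, since $H^0$ with trivial coefficients is always the ring of invariants $\zz^G=\zz$. For $H^1(G;\zz)$, the universal coefficient theorem gives
\[
H^1(G;\zz)\;\cong\;\Hom(H_1(G;\zz),\zz)\oplus \Ext(H_0(G;\zz),\zz).
\]
The Ext term vanishes because $H_0(G;\zz)=\zz$ is free, and the Hom term vanishes because $H_1(G;\zz)$ is finite by the assumption $t\ne 0$ together with Morimoto's computation recalled just above the theorem statement. Hence $H^1(G;\zz)=0$.

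Next I would handle the top two degrees via Poincar\'e duality. Because $X$ is a closed orientable $3$-manifold, $H^k(X;\zz)\cong H_{3-k}(X;\zz)$, and since $X$ is aspherical these are the group (co)homology of $G$. Applied at $k=3$ this gives $H^3(G;\zz)\cong H_0(G;\zz)=\zz$. Applied at $k=2$ it gives $H^2(G;\zz)\cong H_1(G;\zz)$, which by Morimoto's calculation is $\zz_{4t}\oplus\zz_2\oplus\zz_2$ when $s$ is even and $\zz_{4t}\oplus\zz_4$ when $s$ is odd.

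There is really no hard step here; the substantive content is already contained in the cited results (Morimoto's presentation and computation of $H_1$, and the fact that $X$ is a closed orientable $K(G,1)$). The only point worth being careful about is justifying that the gluing of two twisted $I$-bundles over the Klein bottle does produce a closed orientable manifold so that Poincar\'e duality applies, but this is part of the construction recalled in the introduction.
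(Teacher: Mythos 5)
Your proof is correct and follows essentially the same route as the paper: the universal coefficient theorem together with the finiteness of $H_1(G;\zz)$ (from Morimoto's computation and $t\ne 0$) gives $H^1(G;\zz)=0$, and Poincar\'e duality for the closed orientable aspherical $3$-manifold $X$ gives $H^2$ and $H^3$. Nothing further is needed.
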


The above theorem tells us everything about the cohomology of $G$ with
trivial integer coefficients, since $H^1(G;\zz) = 0$ and $\cd(G)=3$
imply that all the potentially nontrivial cup products are actually
null. For other coefficients, however, we need to know more about $G$.

First of all we present two exact sequences featuring $G$. The reader
can consult~\cite{GoncalvesWong} for more details. If we let $x =
a_1^2$ and $y = b_1$, then $xy=yx$ and the subgroup $N$ of $G$
generated by $x$ and $y$ is normal in $G$ and $Q = G/N =
\gen{\ova_1,\ova_2 \mid \ova_1^2 = \ova_2^2 = 1} \cong \zz_2*\zz_2$,
so $G$ fits in an exact sequence of groups
\[
\begin{tikzcd}
\zz\oplus\zz \arrow[tail]{r} & G \arrow[twoheadrightarrow]{r} & \zz_2 *\zz_2.
\end{tikzcd}
\]
The exact sequence above implies that every element $g\in G$ can be
uniquely written as $g = wx^iy^j$, where $i$ and $j$ are integers and
$w$ is a (possibly empty) word in the alphabet $\{a_1,a_2\}$ with
alternating letters. This can be seen as a ``normal form'' for the
elements of $G$. This normal form is used to define the map $s_0$ that
appears just before Theorem~\ref{theorem:Gdiagonal}.

If we let $v = a_1^{-1}a_2$, then the subgroup $L$ of $G$ generated by
$x$, $y$ and $v$ is normal of index $2$, for $L$ is the kernel of
$\eta\colon G\to \zz_2 = \gen{\ova_1 \mid \ova_1^2 = 1}$ defined by
$\eta(a_1) = \ova_1$, $\eta(a_2) = \ova_1$, $\eta(b_1) = 1$. It
follows from the presentation~(\ref{eq:Gpresentation}) that
\begin{align}
a_2xa_2^{-1} &= x^{ru+st}y^{2su}, \label{eq:a2cjgx} \\
a_2ya_2^{-1} &= x^{-2rt}y^{-ru-st}, \label{eq:a2cjgy} \\
\intertext{hence}
vxv^{-1} &= x^{ru+st}y^{-2su}, \label{eq:vcjgx} \\
vyv^{-1} &= x^{-2rt}y^{ru+st}. \label{eq:vcjgy}
\end{align}
Therefore we have $L = \gen{x,y\mid xy=yx}\rtimes_\theta\gen{v} \cong
(\zz\oplus\zz)\rtimes_\theta\zz$, where $\theta\colon \zz \to
\Aut(\zz\oplus\zz)$ is given by the matrix
\[
\theta(1) = \begin{pmatrix}
ru+st & -2rt \\
-2su & ru+st
\end{pmatrix},
\]
and so there is another exact sequence of groups
\[
\begin{tikzcd}
(\zz\oplus\zz)\rtimes_\theta\zz \arrow[tail]{r} & G
  \arrow[twoheadrightarrow]{r} & \zz_2.
\end{tikzcd}
\]

The matrix $\theta(1)$ satisfies $\det\theta(1) = 1$ and
$|\tr(\theta(1))| > 2$, which implies that $\theta(1)$ doesn't admit
$\pm 1$ as an eigenvalue.

The last exact sequence implies that every element of $G$ can be
uniquely written as $x^my^nv^k$ or $x^my^nv^ka_1$ for $m$, $n$, $k\in
\zz$, which is a second normal form for the elements of $G$. A third
normal form is as follows: each $g\in G$ can be uniquely written
either as $x^my^nv^k$ or $x^my^nv^ka_2$. To prove that, we note first
that
\begin{align*}
a_1va_1^{-1} &= a_1(va_1^{-2})a_1 \\
&= a_1(vx^{-1})a_1 \\
&= a_1(x^{-ru-st}y^{2su}v)a_1 \\
&= (a_1x^{-ru-st}y^{2su})va_1 \\
&= x^{-ru-st}y^{-2su}(a_1v)a_1 \\
&= x^{-ru-st}y^{-2su}a_2a_1 \\
&= x^{-ru-st}y^{-2su}(x^ry^s)a_2^{-1}a_1 \\
&= x^{r-ru-st}y^{s-2su}v^{-1}.
\end{align*}
Now, if we also denote by $\theta$ the map $N \to N$ given by
$\theta(x) = x^{ru+st}y^{-2su}$, $\theta(y) = x^{-2rt}y^{ru+st}$, an
element of the group $G$ of the form $g = x^my^nv^ka_2$ can be written
as
\begin{align*}
g &= x^my^nv^k(a_1v) \\
&= x^my^nv^k(x^{r-ru-st}y^{s-2su}v^{-1}a_1) \\
&= x^my^n\theta^k(x^{r-ru-st}y^{s-2su})v^{k-1}a_1.
\end{align*}
Our claim now follows from our known previous normal form for the
elements of $G$. Finally, we also note that there is a fourth normal
form for the elements of $G$: each $g\in G$ can be uniquely written as
$v^kx^my^n$ or $v^kx^my^na_2$. This is consequence of our third normal
form and of the fact that the matrix $\theta(1)$ is invertible
($\det\theta(1) = (ru-st)^2 = 1$). This last normal form will be used
in the proof of Theorem~\ref{theorem:Gresolution}

\bigskip
Now we recall one property of the group ring $\zz G$ that we will need
later, and that is the fact that $\zz G$ has no nontrivial zero
divisors. This follows from the main result proved
in~\cite{FarkasSnider}.

\section{Free resolution of $\zz$ over $\zz G$}\label{section:freeresolution}

In this section we determine a finite free resolution $F$ of $\zz$
over $\zz G$, a partial diagonal approximation $\Delta$ for the
resolution $F$, and discuss how to compute the cohomology ring of $G$
for arbitrary coefficients.

\begin{theorem}\label{theorem:Gresolution}
A free resolution of $\zz$ over $\zz G$ is given by
\[
\begin{tikzcd}
0 \arrow{r} & F_3 \arrow{r} & F_2 \arrow{r}{d_2} & F_1 \arrow{r}{d_1} & F_0 \arrow{r}{\varepsilon} & \zz \arrow{r} & 0,
\end{tikzcd}
\]
where $F_0 = \zz G$, $F_1 = \zz G^3$, $F_2 = \zz G^3$ and $F_3 =\zz
G$. More precisely, if we call $\alpha_1$, $\beta_1$ and $\alpha_2$
the generators of $F_1$ and $\rho_1$, $\rho_2$ and $\rho_3$ the
generators of $F_2$, then the maps $d_1$, $d_2$, and $\varepsilon$ are
given by
\begin{align*}
\varepsilon(1) &= 1, \\
d_1(\alpha_1) &= a_1-1, \\
d_1(\beta_1) &= b_1-1, \\
d_1(\alpha_2) &= a_2-1, \\
d_2(\rho_1) &= (1-b_1^{-1})\alpha_1 + (a_1 + b_1^{-1})\beta_1, \\
d_2(\rho_2) &= \dfrac{\partial a_1^{2r}}{\partial a_1}\alpha_1 + a_1^{2r}\dfrac{\partial b_1^s}{\partial b_1}\beta_1 + (-a_2-1)\alpha_2, \\
d_2(\rho_3) &= \left(a_2\dfrac{\partial a_1^{2t}}{\partial a_1} + a_1^{-2t}b_1^{-u}\dfrac{\partial a_1^{2t}}{\partial a_1}\right)\alpha_1 + \\
&\phantom{=}+ \left(a_2a_1^{2t}\dfrac{\partial b_1^u}{\partial b_1} + b_1^{-u}\dfrac{\partial b_1^u}{\partial b_1}\right)\beta_1 + (1 - a_1^{-2t}b_1^{-u})\alpha_2,
\end{align*}
where the partial derivatives are the Fox derivatives.
\end{theorem}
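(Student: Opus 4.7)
The plan is to construct the low-degree part of the resolution using Fox calculus on the presentation~(\ref{eq:Gpresentation}), and then determine $F_3$ and the top boundary by analyzing $\ker d_2$ algebraically, leveraging the fact from Section~\ref{section:G} that $\zz G$ has no zero divisors.

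For the low-degree part I would take $\varepsilon(1) = 1$, $d_1(\alpha_1) = a_1-1$, $d_1(\beta_1) = b_1-1$, $d_1(\alpha_2) = a_2-1$; exactness at $F_0$ is standard, since the augmentation ideal of $\zz G$ is generated as a left $\zz G$-module by $\{g-1 : g \text{ a generator}\}$. Rewriting the three defining relations of $G$ as words $r_1 = a_1 b_1 a_1^{-1} b_1$, $r_2 = a_1^{2r} b_1^s a_2^{-2}$ and $r_3 = a_2 a_1^{2t} b_1^u a_2^{-1} a_1^{2t} b_1^u$ (each equal to $1$ in $G$), I would define $d_2(\rho_i)$ from the Fox derivatives of $r_i$. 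Reducing the coefficients modulo the relations of $G$ recovers the formulas in the statement; for example $\partial r_1/\partial a_1 = 1 - a_1 b_1 a_1^{-1} = 1 - b_1^{-1}$, and $\partial r_2/\partial a_2 = a_1^{2r} b_1^s(-a_2^{-1} - a_2^{-2}) = a_2^2(-a_2^{-1} - a_2^{-2}) = -a_2 - 1$. The identity $d_1 \circ d_2 = 0$ is then immediate from the fundamental Fox calculus identity $\sum_g (\partial w/\partial g)(g-1) = w-1$ applied to each relator. Exactness at $F_1$ follows from the identification of this truncation with the cellular chain complex of the universal cover of the presentation $2$-complex, whose $H_1$ vanishes by simple-connectivity.

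The substantive step is to exhibit $\ker d_2$ as a free $\zz G$-module of rank $1$. Here I would exploit the fourth normal form from Section~\ref{section:G} (each $g \in G$ uniquely of the form $v^k x^m y^n$ or $v^k x^m y^n a_2$). If $z = c_1 \rho_1 + c_2 \rho_2 + c_3 \rho_3 \in \ker d_2$, the vanishing of the $\alpha_2$-coefficient reads $c_2(a_2+1) = c_3(1 - a_1^{-2t}b_1^{-u})$. Expanding $c_2$ and $c_3$ in the fourth normal form and using that $\theta(1)$ is invertible together with the domain property of $\zz G$ determines the pair $(c_2, c_3)$ up to left-multiplication by a single element of $\zz G$. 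The remaining two equations coming from the $\alpha_1$- and $\beta_1$-coefficients then fix $c_1$ uniquely, again by the absence of zero divisors, yielding an explicit generator $\rho$ of $\ker d_2$. Setting $F_3 = \zz G$ with $d_3(1) = \rho$ produces a map whose injectivity is automatic from $\rho \ne 0$ and $\zz G$ being a domain (end of Section~\ref{section:G}, via~\cite{FarkasSnider}).

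The main obstacle will be the explicit bookkeeping required to extract $\rho$ and to verify that the candidate generator actually spans the full kernel rather than a proper submodule. Once the $\alpha_2$-equation reduces the problem to a one-parameter family, substitution into the first two equations requires a nontrivial compatibility check that depends on the arithmetic relation $\det\theta(1) = (ru-st)^2 = 1$ and on commutations such as $a_1^{2t} b_1 = b_1 a_1^{2t}$ that follow from the first defining relation of $G$.
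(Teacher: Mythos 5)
Your proposal follows essentially the same route as the paper: the truncated resolution $F_2\to F_1\to F_0\to\zz$ comes from Fox calculus on the Morimoto presentation (the paper cites Lyndon--Schupp for exactness at $F_1$), and $\ker d_2$ is identified as free of rank one by first analyzing the $\alpha_2$-coefficient equation $Y(a_2+1)=Z(1-x^{-t}y^{-u})$ in the fourth normal form and then solving for the remaining coefficient using that $\zz G$ has no zero divisors. The step you defer to ``bookkeeping'' is exactly where the paper does its real work: it grades elements of $\zz G$ by the exponent of $v$, runs a support-finiteness argument on the coefficient arrays (using that $(r,s)$ and $(t,u)$ are linearly independent, i.e.\ $ru-st=\pm1$) to force the factorization $Y=W(1-x^ty^u)$, $Z=W(a_2-x^ty^u)$, and then exhibits the explicit $X_0$ of~(\ref{eq:X0}) in the case $r>0$, $t<0$, to which Morimoto's Theorem~1 reduces the general case.
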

\begin{proof}
Let $r_1 = a_1b_1a_1^{-1}b_1$, $r_2 = a_1^{2r}b_1^sa_2^{-2}$ and $r_3
=a_2a_1^{2t}b_1^ua_2^{-1}a_1^{2t}b_1^u$. From~(\ref{eq:Gpresentation}),
we can write $G = \gen{a_1,b_1,a_2 \mid r_1,\:r_2,\:r_3}$.
Following~\cite[chapter~2,~section~3]{LyndonSchupp77},
the sequence
\[
\begin{tikzcd}
F_2 \arrow{r}{d_2} & F_1 \arrow{r}{d_1} & F_0 \arrow{r}{\varepsilon} & \zz \arrow{r} & 0,
\end{tikzcd}
\]
is exact. But $\cd(G)=3$ implies that $\ker(d_2)$ is projective
(see~\cite[Lemma~VIII.2.1]{Brown82}). We are going to prove that
$\ker(d_2)$ is actually free and isomorphic to $\zz G$.

The element $X\rho_1 + Y\rho_2 + Z\rho_3$ belongs to $\ker(d_2)$ if,
and only if,
\begin{equation}\label{eq:systemM3}
\left|\begin{array}{l}
X(1-b_1^{-1}) + Y\dfrac{\partial a_1^{2r}}{\partial a_1} + Z\left(a_2\dfrac{\partial a_1^{2t}}{\partial a_1} + a_1^{-2t}b_1^{-u}\dfrac{\partial a_1^{2t}}{\partial a_1}\right) = 0 \\
X(a_1+b_1^{-1}) + Ya_1^{2r}\dfrac{\partial b_1^s}{\partial b_1} + Z\left(a_2a_1^{2t}\dfrac{\partial b_1^u}{\partial b_1} + b_1^{-u}\dfrac{\partial b_1^u}{\partial b_1}\right) = 0 \\
Y(-a_2-1) + Z(1-a_1^{-2t}b_1^{-u}) = 0.
\end{array}\right.
\end{equation}
Let's focus on the third equation $Y(a_2+1) =
Z(1-x^{-t}y^{-u})$. Given an element $g\in G$, write it in the normal
form $g = v^kx^my^na_2^{\ell}$, where $\ell$ is $0$ or $1$. We have
\begin{align*}
&v^kx^my^n(a_2+1) = v^kx^my^na_2 + v^kx^my^n, \\
&v^kx^my^na_2(a_2+1) = v^kx^{m+r}y^{n+s} + v^kx^my^na_2, \\
&v^kx^my^n(1-x^{-t}y^{-u}) = v^kx^my^n -v^kx^{m-t}y^{n-u}, \\
&v^kx^my^na_2(1-x^{-t}y^{-u}) = v^kx^my^na_2 - v^kx^{m+t}y^{n+u}a_2.
\end{align*}
The important point to notice is that in the four equations above, the
exponent of $v$ is the same on the left side and on both terms of the
right side. This motivates the following definition: given $W =
\sum_{k,m,n} \phi_{k,m,n}v^kx^my^n + \psi_{k,m,n}v^kx^my^na_2 \in \zz
G$ and $\ell\in\zz$, the {\it $\ell$-component\/} of $W$ is the sum of
all the terms of $W$ where the exponent of $v$ is equal to $\ell$:
\[
\text{$\ell$-component of $W$} = \sum_{m,n\in\zz} \phi_{\ell,m,n}v^\ell x^my^n
+ \psi_{\ell,m,n}v^\ell x^my^na_2.
\]
Therefore, if $Y(a_2+1) = Z(1-x^{-t}y^{-u})$, then this equality
remains valid if we substitute $Y$ and $Z$ by their respective
$\ell$-components. So let
\begin{gather*}
\sum_{m,n} \alpha_{m,n}v^\ell x^my^n + \beta_{m,n}v^\ell x^my^na_2 \\
\intertext{and}
\sum_{m,n} \gamma_{m,n}v^\ell x^my^n + \delta_{m,n}v^\ell x^my^na_2
\end{gather*}
be the $\ell$-components of $Y$ and $Z$, respectively. Now $Y(a_2+1) =
Z(1-x^{-t}y^{-u})$ implies
\begin{gather*}
\sum_{m,n} (\alpha_{m,n}+\beta_{(m-r),(n-s)})v^\ell x^my^n + (\alpha_{m,n}+\beta_{m,n})v^\ell x^my^na_2 = \\
=\sum_{m,n} (\gamma_{m,n}-\gamma_{(m+t),(n+u)})v^\ell x^my^n + (\delta_{m,n}-\delta_{(m-t),(n-u)})v^\ell x^my^na_2.
\end{gather*}
From the above equality it follows that
\begin{align*}
(\alpha_{m,n}+\beta_{m,n}) - (\alpha_{m,n}+\beta_{(m-r),(n-s)}) &= (\delta_{m,n}-\delta_{(m-t),(n-u)}) - (\gamma_{m,n}-\gamma_{(m+t),(n+u)}) \Iff\\
\beta_{m,n} - \beta_{(m-r),(n-s)} &= (\delta_{m,n}-\delta_{(m-t),(n-u)}) - (\gamma_{m,n}-\gamma_{(m+t),(n+u)}),
\end{align*}
and therefore
\[
\sum_{k\in\zz} \beta_{(m+kt),(n+ku)} - \beta_{(m-r+kt),(n-s+ku)} = 0.
\]
The sum on the left side makes sense because only finitely many of the
coefficients $\beta_{m,n}$, $\gamma_{m,n}$ and $\delta_{m,n}$ are
distinct from zero. If we define $B(m,n) = \sum_{k\in\zz}
\beta_{(m+kt),(n+ku)}$, the above equality means that
\[
B(m,n) = B(m-r,n-s) = B(m-2r,n-2s) = \cdots = B(m-kr,n-ks) = \cdots,
\]
that is, $B(m,n)=B(m-kr,m-ks)$ for all $k\in\zz$. Since only finitely
many $\beta_{m,n}$ are distinct from zero and $(r,s)$ and $(t,u)$ are
linearly independent, we actually have $B(m,n) = 0$ for all $m$,
$n$. A similar argument beginning with the identity
\[
\alpha_{m,n}+\beta_{m,n} = \delta_{m,n}-\delta_{(m-t),(n-u)}
\]
shows that $A(m,n) = \sum_{k\in\zz}\alpha_{(m+kt,n+ku)} = 0$ for all
$m$, $n$.  Now we notice that the conditions $A(m,n)=0$ and $B(m,n)=0$
for all $m$, $n\in\zz$ imply that the $\ell$-component of $Y$ can be
factored as $Y'(1-x^ty^u)$. The same then can be said for $Y$, so
there is a $W\in \zz G$ such that $Y = W(1-x^ty^u)$ and
\begin{gather*}
Y(a_2+1) = Z(1-x^{-t}y^{-u}) \Iff\\
W(1-x^ty^u)(a_2+1) = Z(1-x^{-t}y^{-u}) \Iff\\
W(a_2-x^ty^u)(1-x^{-t}y^{-u}) = Z(1-x^{-t}y^{-u}) \Iff\\
W(a_2-x^ty^u) = Z.
\end{gather*}
Here we have used the fact that $\zz G$ has no nontrivial zero
divisors, so the cancellation law holds. Given $Y=W(1-x^ty^u)$ and
$Z=W(a_2-x^ty^u)$, let's show that there is exactly one $X$ that
satisfies the first two equations of~(\ref{eq:systemM3}). It is clear
that there is at most one such $X$. Let $Y_0 = 1-x^ty^u$ and $Z_0 =
a_2-x^ty^u$, and suppose that $r>0$, $t<0$. In this case,
\begin{equation}\label{eq:X0}
X_0 = \left(\sum_{k=t}^{r+t-1}a_1^{2k}\frac{\partial b_1^u}{\partial b_1} +
\sum_{k=r+t}^{r-1}a_1^{2k}\frac{\partial b_1^s}{\partial b_1} + 
\sum_{k=t+1}^{r+t}a_1^{2k-1}\frac{\partial b_1^{-u}}{\partial b_1} +
\sum_{k=r+t+1}^ra_1^{2k-1}\frac{\partial b_1^{-s}}{\partial b_1}\right)b_1
\end{equation}
is such that $X=X_0$, $Y=Y_0$ and $Z=Z_0$ is a solution
of~(\ref{eq:systemM3}). In all the other cases, we can similarly
determine $X_0$, but Theorem 1 of~\cite{Morimoto} shows that there is
no loss of generality in assuming $r>0$ and $t<0$. Hence $\ker(d_2)$,
as a $\zz G$-module, is free, isomorphic to $\zz G$ and generated by
the element $X_0\rho_1 + Y_0\rho_2 + Z_0\rho_3$. With this notation,
the map $d_3\colon F_3\to F_2$ is defined by $d_3(1) = X_0\rho_1 +
Y_0\rho_2 + Z_0\rho_3$.
\end{proof}

We now present a partial diagonal approximation for the free
resolution of Theorem~\ref{theorem:Gresolution}. In order to do that,
the next two propositions proved by Handel (see~\cite{Handel}) tell us
how we can calculate a diagonal approximation for a given free
resolution, provided that we can find a contracting homotopy for it.

\begin{proposition}[Handel]\label{proposition:handelcontraction}
For a group $\Gamma$, let
\[
\begin{tikzcd}
  \cdots \arrow{r} & C_n \arrow{r} & \cdots \arrow{r} & C_1 \arrow{r} & C_0 \arrow{r}{\varepsilon} \arrow{r} & \zz \arrow{r} & 0
\end{tikzcd}
\]
be a free resolution of $\zz$ over $\zz \Gamma$. If $s$ is a
contracting homotopy for the resolution $C$, then a contracting
homotopy $\tilde{s}$ for the free resolution $C\otimes C$ of $\zz$
over $\zz \Gamma$ is given by
\begin{align*}
\tilde{s}_{-1}\colon &\zz \to C_0\otimes C_0 \\
&\tilde{s}_{-1}(1) = s_{-1}(1)\otimes s_{-1}(1), \\
\tilde{s}_n\colon &(C\otimes C)_n \to (C\otimes C)_{n+1} \\
& \tilde{s}_n(u_i\otimes v_{n-i}) = s_i(u_i)\otimes v_{n-i} + s_{-1}\varepsilon(u_i)\otimes s_{n-i}(v_{n-i}) \quad\text{if $n\ge 0$},
\end{align*}
where $s_{-1}\varepsilon\colon C_0\to C_0$ is extended to
$s_{-1}\varepsilon = \{(s_{-1}\varepsilon)_n \colon C_n \to C_n\}$ in
such a way that $(s_{-1}\varepsilon)_n = 0$ for $n\ge 1$.
\end{proposition}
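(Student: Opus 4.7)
The plan is to verify directly the two identities defining a contracting homotopy for $C\otimes C$: $\tilde\varepsilon\tilde s_{-1} = 1_{\zz}$ and $\tilde d\,\tilde s_n + \tilde s_{n-1}\tilde d = 1_{(C\otimes C)_n}$ for every $n\ge 0$, where $\tilde d$ is the Koszul differential $\tilde d(u_i\otimes v_j) = du_i\otimes v_j + (-1)^i u_i\otimes dv_j$ and $\tilde\varepsilon = \varepsilon\otimes\varepsilon$. The first identity is immediate from $\varepsilon s_{-1} = 1_{\zz}$: $\tilde\varepsilon\tilde s_{-1}(1) = \varepsilon s_{-1}(1)\otimes\varepsilon s_{-1}(1) = 1\otimes 1 = 1$.

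For the homotopy identity, I would split $\tilde s_n$ into the two pieces suggested by its formula,
\[
A(u_i\otimes v_{n-i}) = s_i(u_i)\otimes v_{n-i}, \qquad B(u_i\otimes v_{n-i}) = s_{-1}\varepsilon(u_i)\otimes s_{n-i}(v_{n-i}),
\]
and establish separately that $\tilde d A + A\tilde d = 1 - s_{-1}\varepsilon\otimes 1$ and $\tilde d B + B\tilde d = s_{-1}\varepsilon\otimes 1 - s_{-1}\varepsilon\otimes s_{-1}\varepsilon$. Adding the two gives $\tilde d\tilde s + \tilde s \tilde d = 1 - s_{-1}\varepsilon\otimes s_{-1}\varepsilon = 1 - \tilde s_{-1}\tilde\varepsilon$, as required. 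Note that $B$ is supported in the sector $i=0$ because $s_{-1}\varepsilon$ vanishes in positive degrees by the convention stated in the proposition.

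For $A$, a direct computation on a generator $u_i\otimes v_{n-i}$ yields
\[
\tilde d A(u_i\otimes v_{n-i}) = ds_i(u_i)\otimes v_{n-i} + (-1)^{i+1}s_i(u_i)\otimes dv_{n-i},
\]
\[
A\tilde d(u_i\otimes v_{n-i}) = s_{i-1}(du_i)\otimes v_{n-i} + (-1)^i s_i(u_i)\otimes dv_{n-i};
\]
the crossed terms on $s_i(u_i)\otimes dv_{n-i}$ cancel by the sign $(-1)^{i+1}+(-1)^i=0$, and the surviving pieces combine via the homotopy identity $ds_i + s_{i-1}d = 1 - (s_{-1}\varepsilon)_i$ into $(1-s_{-1}\varepsilon)(u_i)\otimes v_{n-i}$. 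For $B$ the computation collapses to the sector $i=0$, and using that $d$ vanishes on $C_0$ in the unaugmented sense it reduces to $s_{-1}\varepsilon(u_0)\otimes(ds_n(v_n) + s_{n-1}(dv_n))$, which equals $s_{-1}\varepsilon(u_0)\otimes(1 - s_{-1}\varepsilon)(v_n)$ by the same homotopy relation applied in the second slot.

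The main obstacle is entirely bookkeeping at the low-degree boundary. When $i=1$ the term $A\tilde d$ picks up an extra summand $s_{-1}\varepsilon(du_1)\otimes s_{n-1}(v_{n-1})$, which however vanishes because $\varepsilon d = 0$; when $i=0$ one must read the homotopy identity with $(s_{-1}\varepsilon)_0 \ne 0$; and the base case $n=0$ needs to be written out separately, since that is where $\tilde s_{-1}\tilde\varepsilon$ is inserted explicitly in the identity. Once these boundary cases are enumerated, all signs and cancellations fall into place without further ideas, and the verification is a routine application of the contracting homotopy relations for $s$.
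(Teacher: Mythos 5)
Your verification is correct, and it is worth noting up front that the paper offers no proof of this proposition at all: it is quoted from Handel's paper \cite{Handel} and used as a black box. Your argument is the standard ``tensor product of contractions'' computation, and it goes through: writing $e = s_{-1}\varepsilon$ (extended by zero in positive degrees), the split $\tilde{s} = A + B$ with $A = s\otimes 1$ and $B = e\otimes s$ does yield $\tilde{d}A + A\tilde{d} = (1-e)\otimes 1$ and $\tilde{d}B + B\tilde{d} = e\otimes 1 - e\otimes e$, using $ds+sd = 1-e$ in each slot, the sign cancellation $(-1)^{i}+(-1)^{i+1}=0$ on the crossed terms, $\varepsilon d_1 = 0$, and the vanishing of the differential out of degree $0$ in the unaugmented complex; the sum $1 - e\otimes e$ is the identity in positive total degree and $1 - \tilde{s}_{-1}\tilde{\varepsilon}$ in degree $0$, which is exactly the contracting homotopy identity. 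The only slip is one of bookkeeping: the boundary term $s_{-1}\varepsilon(du_1)\otimes s_{n-1}(v_{n-1})$ that you attribute to $A\tilde{d}$ in fact arises from $B\tilde{d}$ (since $A$ involves no $s_{-1}\varepsilon$ in the first factor), but it vanishes for the reason you give ($\varepsilon d = 0$), so nothing is affected. Your proof is complete where the paper simply defers to the literature.
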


\begin{proposition}[Handel]\label{proposition:handeldiagonal}
For a group $\Gamma$, let
\[
\begin{tikzcd}
  \cdots \arrow{r} & C_n \arrow{r}{d_n} & \cdots \arrow{r} & C_1 \arrow{r}{d_1} & C_0 \arrow{r}{\varepsilon} \arrow{r} & \zz \arrow{r} & 0
\end{tikzcd}
\]
be a free resolution of $\zz$ over $\zz \Gamma$. Let $B_n$ be a $\zz
\Gamma$-basis for $C_n$ for each $n\ge 0$ such that $\varepsilon(b)=1$
for all $b\in B_0$, and let $s$ be a contracting homotopy for this
resolution $C$. If $\tilde{s}$ is the contracting homotopy for the
resolution $C\otimes C$ given by
Proposition~\ref{proposition:handelcontraction}, then a diagonal
approximation $\Delta\colon C\to C\otimes C$ can be defined in the
following way: for each $n\ge 0$, the map $\Delta_n\colon C_n \to
(C\otimes C)_n$ is given in each element $\rho \in B_n\subset C_n$ by
\[
\begin{array}{l}
\Delta_0 = s_{-1}\varepsilon\otimes s_{-1}\varepsilon,\\
\Delta_n(\rho) = \tilde{s}_{n-1}\Delta_{n-1}d_n(\rho) \quad\text{if $n\ge 1$}.
\end{array}
\]
\end{proposition}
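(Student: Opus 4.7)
The plan is to verify two conditions that together say $\Delta$ is a diagonal approximation: (i) the augmentation identity $(\varepsilon\otimes\varepsilon)\Delta_0 = \varepsilon$, and (ii) the chain-map identity $\tilde{d}_n\Delta_n = \Delta_{n-1}d_n$ for every $n \geq 1$. Each $\Delta_n$ is prescribed on the free basis $B_n$ and then extended by $\zz\Gamma$-linearity, so well-definedness is automatic, and by $\zz\Gamma$-linearity of both sides it suffices to check (i) and (ii) on basis elements. I would carry out (i) and (ii) by induction on $n$, with the contracting-homotopy identities for $\tilde{s}$ supplied by Proposition~\ref{proposition:handelcontraction}, namely $\tilde{d}_1\tilde{s}_0 + \tilde{s}_{-1}(\varepsilon\otimes\varepsilon) = \id$ in degree $0$ and $\tilde{d}_{n+1}\tilde{s}_n + \tilde{s}_{n-1}\tilde{d}_n = \id$ in degree $n \geq 1$, as the single workhorse.

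For the base case, I would evaluate $(\varepsilon\otimes\varepsilon)\Delta_0$ on a basis element $b \in B_0$: the formula gives $\Delta_0(b) = s_{-1}(1)\otimes s_{-1}(1)$, and $\varepsilon s_{-1} = \id_\zz$ yields $(\varepsilon\otimes\varepsilon)\Delta_0(b) = 1 = \varepsilon(b)$, after which $\zz\Gamma$-linearity extends the identity to all of $C_0$.

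For the inductive step at $\rho \in B_n$, I would write
\[
\tilde{d}_n\Delta_n(\rho) \;=\; \tilde{d}_n\tilde{s}_{n-1}\bigl(\Delta_{n-1}d_n(\rho)\bigr)
\]
and apply the appropriate contracting-homotopy identity for $\tilde{s}$. For $n \geq 2$ this produces $\Delta_{n-1}d_n(\rho) - \tilde{s}_{n-2}\tilde{d}_{n-1}\Delta_{n-1}d_n(\rho)$; the inductive hypothesis rewrites the inner composite as $\tilde{s}_{n-2}\Delta_{n-2}d_{n-1}d_n(\rho)$, which vanishes because $d_{n-1}d_n = 0$. For $n = 1$ the same computation produces $\Delta_0 d_1(\rho) - \tilde{s}_{-1}(\varepsilon\otimes\varepsilon)\Delta_0 d_1(\rho)$, and the base case together with $\varepsilon d_1 = 0$ kills the correction term, giving $\tilde{d}_1\Delta_1(\rho) = \Delta_0 d_1(\rho)$.

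The only point requiring genuine care is the validity of the two contracting-homotopy identities for $\tilde{s}$ on $C \otimes C$; this is exactly the content of Proposition~\ref{proposition:handelcontraction}, so once that proposition is in hand the present statement reduces to the two-line induction sketched above. The main obstacle, therefore, lies not in the present proposition but in the preceding one; here everything is routine bookkeeping of degrees and of which identity applies in which degree.
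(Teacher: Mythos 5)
Your argument is correct. Note, however, that the paper offers no proof of this proposition at all: it is quoted from Handel's work with a citation, so there is nothing to compare against except the standard argument — which is exactly what you give. Your two conditions (augmentation-preservation and the chain-map identity) are the definition of a diagonal approximation, the induction using $\tilde{d}_n\tilde{s}_{n-1}+\tilde{s}_{n-2}\tilde{d}_{n-1}=\id$ (with the degree-$0$ variant $\tilde{d}_1\tilde{s}_0+\tilde{s}_{-1}(\varepsilon\otimes\varepsilon)=\id$) is the right workhorse, and you correctly identify the two points that actually need attention: the hypothesis $\varepsilon(b)=1$ for $b\in B_0$ is what makes the base case $(\varepsilon\otimes\varepsilon)\Delta_0(b)=1=\varepsilon(b)$ come out, and the recursion is only applied on basis elements and then extended $\zz\Gamma$-linearly, which is essential because $\tilde{s}$ is merely $\zz$-linear, not $\Gamma$-equivariant. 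The verification that both sides of the chain-map identity are $\zz\Gamma$-linear (so that checking on $B_n$ suffices) closes the loop. I see no gap.
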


\begin{remark}
Tomoda and Zvengrowski have used the propositions above to calculate
the cohomology rings of some $4$-periodic
groups~\cite{TomodaPeter2008}.
\end{remark}

\begin{remark}
Using the same notation of the previous propositions and using
$[\phantom{M}]$ to denote cohomology classes, if $u \in \Hom_{\zz
  \Gamma}(C_p,M)$ and $v\in \Hom_{\zz \Gamma}(C_q,N)$, where $M$ and
$N$ denote arbitrary $\zz \Gamma$-modules, the cup product $[u]\smile
[v] \in H^{p+q}(\Gamma; M\otimes N)$ is the cohomology class of the
homomorphism $(u\otimes v)\circ\Delta_{p,q}$, where $\Delta_{p,q}$
denotes the composition of $\Delta_{p+q}$ with the projection
$\pi_{p,q}\colon (C\otimes C)_{p+q} \to C_p\otimes C_q$. Hence, for
the calculation of
\[
H^p(\Gamma; M) \otimes H^q(\Gamma; N) \stackrel{\smile}{\to} H^{p+q}(\Gamma; M\otimes N),
\]
we don't need to discover the map $\Delta_{p+q}$, but only
$\Delta_{p,q}$. We'll use this fact later.
\end{remark}

Notice that, if we define the maps of abelian groups $s_{-1}\colon\zz
\to F_0$ by $s_{-1}(1) = 1$ and $s_0\colon F_0 \to F_1$ by $s_0(g) =
\dfrac{\partial g}{\partial a_1}\alpha_1 + \dfrac{\partial g}{\partial
  b_1}\beta_1 + \dfrac{\partial g}{\partial a_2}\alpha_2$ whenever
$g\in G$ is written in the first normal form $g = wx^iy^j$ mentioned
in Section~\ref{section:G}, then $d_1s_0+s_{-1}\varepsilon =
\id_{F_0}$. Now a simple application of the
Propositions~\ref{proposition:handelcontraction}
and~\ref{proposition:handeldiagonal} determines a partial diagonal
approximation for our projective resolution of $\zz$ over $\zz G$.

\begin{theorem}\label{theorem:Gdiagonal}
A partial diagonal approximation $\Delta\colon F\to (F\otimes F)$ for
the free resolution of Theorem~\ref{theorem:Gresolution} is given by
{\allowdisplaybreaks
\begin{align*}
&\Delta_1 \colon F_1 \to (F\otimes F)_1\\
&\quad\Delta_1(\alpha_1) = \alpha_1\otimes a_1 + 1\otimes \alpha_1, \\
&\quad\Delta_1(\beta_1) = \beta_1\otimes b_1 + 1\otimes \beta_1, \\
&\quad\Delta_1(\alpha_2) = \alpha_2\otimes a_2 + 1\otimes \alpha_2, \\
&\Delta_{1,1} \colon F_2 \to (F_1\otimes F_1) \\
&\quad\Delta_{1,1}(\rho_1) = b_1^{-1}\beta_1\otimes b_1^{-1}\alpha_1 + \alpha_1\otimes a_1\beta_1 - b_1^{-1}\beta_1\otimes \beta_1,\\
&\quad\Delta_{1,1}(\rho_2) = s_0\left(\frac{\partial a_1^{2r}}{\partial a_1}\right)\otimes\alpha_1 + s_0\left(a_1^{2r}\frac{\partial b_1^s}{\partial b_1}\right)\otimes \beta_1 - \alpha_2\otimes\alpha_2,\\
&\quad\Delta_{1,1}(\rho_3) = s_0\left(a_2\frac{\partial a_1^{2t}}{\partial a_1} + a_1^{-2t}b_1^{-u}\frac{\partial a_1^{2t}}{\partial a_1}\right)\otimes\alpha_1 + \\
&\quad\phantom{\Delta_{1,1}(\rho_3) =} + s_0\left(a_2a_1^{2t}\frac{\partial b_1^u}{\partial b_1} + b_1^{-u}\frac{\partial b_1^u}{\partial b_1}\right)\otimes\beta_1 - s_0(a_1^{-2t}b_1^{-u})\otimes\alpha_2.
\end{align*}
}
\end{theorem}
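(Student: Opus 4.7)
The plan is to apply Propositions~\ref{proposition:handelcontraction} and~\ref{proposition:handeldiagonal} directly to the resolution of Theorem~\ref{theorem:Gresolution}, starting from the pair $(s_{-1}, s_0)$ described immediately above the statement. As a preliminary step I would verify that these satisfy $d_1 s_0 + s_{-1}\varepsilon = \id_{F_0}$, which reduces to the fundamental Fox identity $g - 1 = \sum_i \tfrac{\partial g}{\partial x_i}(x_i - 1)$ for each $g \in G$; since $s_0$ is defined on group elements written in the first normal form from Section~\ref{section:G} and then extended $\zz$-linearly, this suffices to promote $(s_{-1}, s_0)$ to the first two pieces of a genuine contracting homotopy.

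With that in hand, $\Delta_1$ is obtained by direct application of the recursion $\Delta_1(\gamma) = \tilde{s}_0 \Delta_0 d_1(\gamma)$ to each basis element $\gamma \in \{\alpha_1, \beta_1, \alpha_2\}$. Interpreting $\Delta_0$ as the $\zz G$-map determined by $\Delta_0(1) = 1 \otimes 1$ with the diagonal action on $F_0 \otimes F_0$, one has $\Delta_0 d_1(\alpha_1) = a_1 \otimes a_1 - 1 \otimes 1$, and applying $\tilde{s}_0(u \otimes v) = s_0(u) \otimes v + s_{-1}\varepsilon(u) \otimes s_0(v)$ together with $s_0(a_1) = \alpha_1$ and $s_0(1) = 0$ recovers $\alpha_1 \otimes a_1 + 1 \otimes \alpha_1$; the cases of $\beta_1$ and $\alpha_2$ are strictly analogous.

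For $\Delta_{1,1}$, I would write $\Delta_2(\rho) = \tilde{s}_1 \Delta_1 d_2(\rho)$ and project onto $F_1 \otimes F_1$. The decisive observation, which avoids needing $s_1$ at all, is that in $\tilde{s}_1$ the only contribution to $F_1 \otimes F_1$ comes from the first summand of $\tilde{s}_1(u_0 \otimes v_1) = s_0(u_0) \otimes v_1 + s_{-1}\varepsilon(u_0) \otimes s_1(v_1)$ (the second summand lies in $F_0 \otimes F_2$), while the entire image of $\tilde{s}_1(u_1 \otimes v_0)$ sits in $F_2 \otimes F_0$ because $(s_{-1}\varepsilon)_1 = 0$. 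Consequently
\[
\Delta_{1,1}(\rho_i) \;=\; (s_0 \otimes \id)\bigl(\pi_{F_0 \otimes F_1}(\Delta_1 d_2(\rho_i))\bigr),
\]
where $\Delta_1$ is extended $\zz G$-equivariantly to $F_1$ using the diagonal action on $F \otimes F$.

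The hard part is then the organized bookkeeping for $\rho_2$ and $\rho_3$: each $d_2(\rho_i)$ contains Fox derivatives of $a_1^{2r}$, $b_1^s$, $a_1^{2t}$, and $b_1^u$, and the diagonal action duplicates every $\zz G$-coefficient across both tensor factors, producing long but structured sums that must be sorted by which factor lies in $F_0$ before $s_0$ is applied. Keeping the Fox derivatives symbolic throughout, invoking the product rule $\tfrac{\partial(uv)}{\partial x} = \tfrac{\partial u}{\partial x} + u\tfrac{\partial v}{\partial x}$ to collapse nested derivatives, and noting the simple values $s_0(1) = 0$, $s_0(a_i) = \alpha_i$, $s_0(b_1) = \beta_1$ for the building blocks that survive after projection, one arrives at the three closed-form expressions stated for $\Delta_{1,1}(\rho_1)$, $\Delta_{1,1}(\rho_2)$, and $\Delta_{1,1}(\rho_3)$.
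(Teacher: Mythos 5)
Your proposal is correct and follows essentially the same route as the paper, which likewise verifies $d_1 s_0 + s_{-1}\varepsilon = \id_{F_0}$ via the fundamental Fox identity and then applies Propositions~\ref{proposition:handelcontraction} and~\ref{proposition:handeldiagonal}. Your explicit observation that the $F_1\otimes F_1$ component of $\tilde{s}_1\Delta_1 d_2(\rho)$ is exactly $(s_0\otimes\id)$ applied to the $F_0\otimes F_1$ part --- so that $s_1$ is never needed --- is precisely what makes the paper's ``partial'' diagonal approximation computable from $s_0$ alone, and is left implicit there.
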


\begin{remark}
In the above expressions, the elements on which we need to compute
$s_0$ are already in the normal form $wx^iy^j$ or can be written in
that form using the relation $a_1b_1a_1^{-1}=b_1^{-1}$.
\end{remark}

Suppose we are given a $\zz G$-module $A$. The problem of calculating
the (co)homology groups $H_*(G;A)$ and $H^*(G;A)$ can be solved using
our free resolution of $\zz$ over $\zz G$. If $B$ is also a $\zz
G$-module, another problem is to determine the cup products
\[
H^1(G;A) \otimes H^1(G;B) \stackrel{\smile}{\to} H^2(G;A\otimes B)\phantom{.}
\]
and
\[
H^1(G;A) \otimes H^2(G;B) \stackrel{\smile}{\to} H^3(G;A\otimes B).
\]
As mentioned in the second remark after
Proposition~\ref{proposition:handeldiagonal}, the product $H^1(G;A)
\otimes H^1(G;B) \stackrel{\smile}{\to} H^2(G;A\otimes B)$ can be
computed using the map $\Delta_{1,1}$ of
Theorem~\ref{theorem:Gdiagonal}.

In order to compute $H^1(G;A) \otimes H^2(G;B) \stackrel{\smile}{\to}
H^3(G;A\otimes B)$, we recall~\cite[Chapter V and Section
  VIII.10]{Brown82} that there is a commutative diagram
\begin{equation}\label{eq:cupcap}
\begin{tikzcd}
H^1(G;A)\otimes H^2(G;B) \arrow{r}{\smile} \arrow{d}{\cong}[swap]{1\otimes(\underline{\phantom{M}}\frown z)} & H^3(G;A\otimes B) \arrow{d}{(\underline{\phantom{M}}\frown z)}[swap]{\cong} \\
H^1(G;A)\otimes H_1(G;B) \arrow{r}{\frown} & H_0(G;A \otimes B)
\end{tikzcd}
\end{equation}
where $z$ is a generator of $H_3(G;\zz)\cong \zz$. In terms of the
free resolution $F$, the cap product $H^1(G;A)\otimes H_1(G;B)
\stackrel{\frown}{\to} H_0(G;A\otimes B)$ can be calculated from a
diagonal approximation $\Delta\colon F \to (F\otimes F)$ by the
composition
\begin{equation}\label{eq:gamma}
\begin{tikzpicture}
\node(n01) at (0,1) [anchor=east] {$\Hom_G(F_1,A) \otimes (F_1\otimes_G B)$};
\node(n21) at (2,1) [anchor=west] {$\Hom_G(F_1,A)\otimes \bigl((F\otimes F)_1\otimes B\bigr)$};
\draw[->] (n01.east) -- (n21.west);
\node(n11) at (1,1) [anchor=south] {$\scriptstyle 1\otimes(\Delta\otimes 1)$};
\node(n00) at (0,0) [anchor=east] {};
\node(n20) at (2,0) [anchor=west] {$F_0\otimes (A\otimes B)$};
\draw[->] (n00.east) -- (n20.west);
\node(n10) at (1,0) [anchor=south] {$\scriptstyle \gamma$};
\end{tikzpicture}
\end{equation}
and then taking cohomology classes, where $\gamma$ is given by
$\gamma\bigl(u\otimes (x\otimes y\otimes n)\bigr) =
(-1)^{\deg(u)\deg(x)} x\otimes u(y)\otimes n$
(see~\cite[Section~V.3]{Brown82} for more details). All we need to
know about $\Delta$ to perform all the above computations is
$\Delta_{0,1}$, which is a summand of $\Delta_1$ given by
Theorem~\ref{theorem:Gdiagonal}.

Therefore, in order to completely understand the multiplicative
structure given by the cup product $H^*(G;A)\otimes H^*(G;B)
\stackrel{\smile}{\to} H^*(G;A\otimes B)$, we must understand the
isomorphism
\begin{equation}\label{eq:isoz}
\varphi_n = \underline{\phantom{M}}\frown z\colon H^n(G;A) \to H_{3-n}(G;A).
\end{equation}
Following~\cite[Section V.4]{Brown82}, $H_*(G;\zz) \cong H_*(F\otimes_G
F)$ and there is a product
\[
\Hom_G(F,A) \otimes (F\otimes_G F) \to F\otimes_G A
\]
given by
\begin{equation}\label{eq:capexplicit}
u\otimes (x\otimes y) \mapsto (-1)^{\deg(u)\deg(x)}x\otimes u(y)
\end{equation}
that induces the cap product 
\[
H^p(G;A)\otimes H_q(G;\zz) \to H_{q-p}(G;A)
\]
In particular, the isomorphism $\varphi_n$ can be
calculated if we find an element $\zeta \in (F\otimes_G F)_3$ such
that $[\zeta]=z \in H_3(G;\zz)$.

In the double complex $(F\otimes_G F)$, the differential $\partial$ is
given by
\begin{align*}
\partial_{p+q} \colon (F\otimes_G F)_{p+q} &\to (F\otimes_G F)_{p+q-1} \\
\partial_{p+q}(x\otimes y) &= \partial_{p,q}'(x\otimes y) +
\partial_{p,q}''(x\otimes y),
\end{align*}
where $\partial_{p,q}'(x\otimes y) = d_p(x)\otimes y$ and
$\partial_{p,q}''(x\otimes y) = (-1)^px\otimes d_q(y)$ for $x\in F_p$ and
$y\in F_q$.

\[
\begin{tikzcd}
F_3 \otimes_G F_3 \arrow{r} \arrow{d} & F_2 \otimes_G F_3 \arrow{r} \arrow{d} & F_1 \otimes_G F_3 \arrow{r}{\partial_{1,3}'} \arrow{d} & F_0 \otimes_G F_3 \arrow[twoheadrightarrow]{r}{\varepsilon_3'} \arrow{d}{\partial_{0,3}''} & \zz\phantom{^3}  \\
F_3 \otimes_G F_2 \arrow{r} \arrow{d} & F_2 \otimes_G F_2 \arrow{r} \arrow{d} & F_1 \otimes_G F_2 \arrow{r}{\partial_{1,2}'} \arrow{d} & F_0 \otimes_G F_2 \arrow[twoheadrightarrow]{r}{\varepsilon_2'} \arrow{d} & \zz^3  \\
F_3 \otimes_G F_1 \arrow{r} \arrow{d} & F_2 \otimes_G F_1 \arrow{r} \arrow{d} & F_1 \otimes_G F_1 \arrow{r} \arrow{d} & F_0 \otimes_G F_1 \arrow{d}  \\
F_3 \otimes_G F_0 \arrow{r} & F_2 \otimes_G F_0 \arrow{r} & F_1 \otimes_G F_0 \arrow{r} & F_0 \otimes_G F_0  \\
\end{tikzcd}
\]

Since $F_3=\zz G$, the cokernel of $\partial_{1,3}'$ is isomorphic to
$\zz$ via the map $\varepsilon'\colon F_0\otimes_G F_3 \to \zz$
defined by $\varepsilon'(x\otimes 1) = \varepsilon(x)$, where
$\varepsilon \colon \zz G\to \zz$ is the augmentation. Similarly,
$\coker(\partial_{1,2}') \cong \zz^3$, with the isomorphism
$\varepsilon_2'\colon F_0\otimes_G F_2 \to \zz^3$ defined in the
obvious way using $\varepsilon$.

The element $(1\otimes 1) \in F_0\otimes_G F_3$ is such that
$\varepsilon_3'(1\otimes 1) = 1 \ne 0$, hence $(1\otimes 1)\not\in
\im(\partial_{1,3}')$. On the other hand, the element $d_3(1) =
X_0\rho_1 + Y_0\rho_2 + Z_0\rho_3 \in F_2$ satisfies $\varepsilon(X_0)
= \varepsilon(Y_0) = \varepsilon(Z_0) = 0$, therefore
$\varepsilon_2'\partial_{0,3}''(1\otimes 1) = 0$, which means that
$\partial_{0,3}''(1\otimes 1) \in \im(\partial_{1,2}')$. A simple
diagram chasing now shows that there is $\zeta\in (F\otimes F)_3$ such
that $\pi_{0,3}(\zeta) = 1\otimes 1$ (where $\pi_{p,q}\colon (F\otimes
F)_{p+q}\to F_p\otimes F_q$ denotes the projection) and $\zeta \in
(\ker(\partial_3)-\im(\partial_4))$, hence $z = [\zeta] \in
H_3(G;\zz)$ is nonzero. If there is an integer $k$ and $\omega\in
(F\otimes F)_3$ such that $w = [\omega] \in H_3(G;\zz)$ satisifes $z =
kw$, then $\varepsilon_3'\pi_{0,3}(\zeta-k\omega) = 0 \iff 1 =
k\cdot\varepsilon_3'\pi_{0,3}(\omega) \then k = \pm 1$. Therefore $z$
generates $H_3(G;\zz)$ and is the same $z$ that gives the isomorphism
$\varphi_n$ of~(\ref{eq:isoz}). Now the formula~(\ref{eq:capexplicit})
tells us that, for the computation of $\varphi_3$, we need
$\pi_{0,3}(\zeta)$, which is simply $(1\otimes 1)$. To calculate
$\varphi_2$, the same formula~(\ref{eq:capexplicit}) requires
$\pi_{1,2}(\zeta)$. This can be determined in the following way: let
$\psi\colon \zz G\to \zz G$ denote the function $\psi\left(\sum
\alpha_gg\right) = \sum \alpha_gg^{-1}$. We have
\begin{align*}
\partial_{0,3}''(1\otimes 1) &= 1\otimes X_0\rho_1 + 1\otimes
Y_0\rho_2 + 1\otimes Z_0\rho_3 \\
&= \psi(X_0)\otimes \rho_1 + \psi(Y_0)\otimes \rho_2 +
\psi(Z_0)\otimes \rho_3
\end{align*}
In the group ring $\zz G$, the identities
\begin{align*}
(gh-1) &= g(h-1) + (g-1), \\
(g^n-1) &= \dfrac{\partial g^n}{\partial g}(g-1)
\end{align*}
for $g$, $h\in G$ allow us to write each of the elements $\psi(X_0)$,
$\psi(Y_0)$ and $\psi(Z_0)$ as
\begin{align*}
\psi(X_0) &= X_{a_1}(a_1-1) + X_{b_1}(b_1-1) + X_{a_2}(a_2-1), \\
\psi(Y_0) &= Y_{a_1}(a_1-1) + Y_{b_1}(b_1-1) + Y_{a_2}(a_2-1), \\
\psi(Z_0) &= Z_{a_1}(a_1-1) + Z_{b_1}(b_1-1) + Z_{a_2}(a_2-1),
\end{align*}
and then
\begin{align*}
\psi(X_0)\otimes \rho_1 + \psi(Y_0)\otimes \rho_2 +
\psi(Z_0)\otimes \rho_3 &= \partial_{1,2}'\bigl((X_{a_1}\alpha_1 + X_{b_1}\beta_1 + X_{a_2}\alpha_2)\otimes \rho_1 +\\
&\phantom{= \partial_{1,2}'\bigl(}(Y_{a_1}\alpha_1 + Y_{b_1}\beta_1 + Y_{a_2}\alpha_2)\otimes \rho_2 +\\
&\phantom{= \partial_{1,2}'\bigl(}(Z_{a_1}\alpha_1 + Z_{b_1}\beta_1 + Z_{a_2}\alpha_2)\otimes \rho_3\bigr).
\end{align*}
This means we can take
\begin{align}\label{eq:pi12}
\pi_{1,2}(\zeta) &= (X_{a_1}\alpha_1 + X_{b_1}\beta_1 + X_{a_2}\alpha_2)\otimes \rho_1 + \notag\\
&\phantom{= (}(Y_{a_1}\alpha_1 + Y_{b_1}\beta_1 + Y_{a_2}\alpha_2)\otimes \rho_2 + \\
&\phantom{= (}(Z_{a_1}\alpha_1 + Z_{b_1}\beta_1 + Z_{a_2}\alpha_2)\otimes \rho_3. \notag
\end{align}

\section{Examples of computations}\label{section:examples}

In this section we compute the cohomology groups $H^*(G;\tilde{\zz})$,
where $\tilde{\zz}$ represents a non-trivial $G$-module with
underlying abelian group $\zz$, show some examples of computation of
the cup products for these twisted integer coefficients, and also
determine the cohomology ring $H^*(G;\zz_p)$ for an odd prime $p$.

\bigskip
First we determine the groups $H^*(G;\zz_\eta)$, where $\zz_\eta$
stands for the $G$-module $\zz$ determined by the homomorphism
$\eta\colon G \to \Aut(\zz) = \{1,-1\}$. We begin the analysis with
the maps $\eta$ such that $\eta(b_1) = -1$ (notice that this implies
that $s$ is even).

\begin{theorem}
Suppose that $\eta\colon G\to \Aut(\zz)$ is such that $\eta(b_1) =
-1$. Then
\begin{align*}
H^0(G;\zz_\eta) &= 0, \\
H^1(G;\zz_\eta) &= \zz_2, \\
H^2(G;\zz_\eta) &= \zz_2\oplus\zz_2, \\
H^3(G;\zz_\eta) &= \zz_2.
\end{align*}
\end{theorem}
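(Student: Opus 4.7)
The plan is to apply $\Hom_{\zz G}(-,\zz_\eta)$ to the free resolution $F$ of Theorem~\ref{theorem:Gresolution}, producing a cochain complex
\[
0 \to \zz \xrightarrow{d^1} \zz^3 \xrightarrow{d^2} \zz^3 \xrightarrow{d^3} \zz \to 0
\]
whose differentials are obtained by applying the augmentation $\eta\colon \zz G \to \zz$ to the matrix entries of $d_1, d_2, d_3$. Before starting the computation I would record two consequences of the hypothesis: applying $\eta$ to the relation $a_2^2 = a_1^{2r}b_1^s$ gives $1 = (-1)^s$, so $s$ is even, and the Sol condition $(ru-st)^2 = 1$ then forces $u$ to be odd. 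These parities simplify the Fox-derivative evaluations dramatically: $\eta(\partial b_1^{\pm s}/\partial b_1) = 0$ and $\eta(\partial b_1^{\pm u}/\partial b_1) = 1$.

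The end-terms of the complex are easy. The $\beta_1$-component of $d^1(c)$ equals $-2c$, so $\ker d^1 = 0$ and $H^0 = 0$. At the other end, after Fox-derivative simplification the explicit formula (\ref{eq:X0}) collapses to $\eta(X_0) = -r(1+\eta(a_1))$, while $\eta(Y_0) = 2$ and $\eta(Z_0) = \eta(a_2)+1$. In every one of the four subcases $(\eta(a_1),\eta(a_2)) \in \{\pm 1\}^2$ the greatest common divisor of these three integers is $2$, so $\im d^3 = 2\zz$ and $H^3 = \zz_2$. This matches the Poincaré-duality prediction $H^3(G;\zz_\eta) \cong H_0(G;\zz_\eta) = \zz/2$, since $\eta$ is nontrivial.

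For $H^1$ and $H^2$ I would handle the four subcases in parallel by writing down the $3 \times 3$ matrix of $d^2$ in each. In every case $\ker d^2$ turns out to be cyclic of rank one (generated by a vector depending on the subcase, e.g.\ $(0,1,0)$, $(0,1,1)$, $(1,1,0)$ or $(1,1,1)$) and $\im d^1$ is its index-two subgroup, yielding $H^1 = \zz_2$. The induced map $\zz^3/\ker d^2 \to \ker d^3$ has a $2\times 2$ integer matrix of determinant $\pm 4$ whose entry-gcd is $2$, so its Smith form is $\mathrm{diag}(2,2)$ and $H^2 = \zz_2\oplus\zz_2$.

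The main obstacle will be the $\eta(X_0)$ calculation from the four-summand formula (\ref{eq:X0}): one has to check that, of the four sums, the two involving $\partial b_1^{\pm s}/\partial b_1$ vanish while the two involving $\partial b_1^{\pm u}/\partial b_1$ telescope to $r$ and $r\eta(a_1)$ (which then gets multiplied by $\eta(b_1) = -1$). After that the remaining work is routine integer linear algebra, uniformly structured across the four sign subcases, and the common factor of $2$ that $\eta(b_1) = -1$ injects throughout the complex is what makes all three nonzero cohomology groups $2$-torsion.
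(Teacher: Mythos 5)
Your proposal is correct, and I checked the arithmetic: the parity facts ($s$ even forces $u$, $r$ odd via $(ru-st)^2=1$), the evaluations $\eta(Y_0)=2$, $\eta(Z_0)=\eta(a_2)+1$, $\eta(X_0)=-r(1+\eta(a_1))$, and the four $d^2$-matrices all come out as you describe, giving $H^0=0$, $H^1=\zz_2$, $H^2=\zz_2\oplus\zz_2$, $H^3=\zz_2$ (your kernel generators $(0,1,0)$, $(0,1,1)$, $(1,1,0)$, $(1,1,1)$ are exactly what the four sign subcases produce, and in each case $\im d^2 = 2\ker d^3$). However, this is a genuinely different route from the paper's. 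The paper explicitly declines to use the resolution here and instead runs the Lyndon--Hochschild--Serre spectral sequence of $\zz\oplus\zz \rightarrowtail G \twoheadrightarrow \zz_2 * \zz_2$: it computes $H^*(N;\zz_\eta)$ (finding the $q=0$ row vanishes), observes that $E_2^{p,q}$ is $\zz_2$ for $p=0$ and $\zz_2\oplus\zz_2$ for $p\ge 1$ when $q\in\{1,2\}$, and then forces the $d_2$ differentials to be isomorphisms for $p\ge 1$ (using $H^n(G;\zz_\eta)=0$ for $n\ge 4$) and $d_2^{0,2}$ to be injective (using Poincar\'e duality to get $H^3\cong H_0\cong\zz_2$). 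The spectral-sequence argument is shorter, avoids the four-way case split and the delicate evaluation of the formula~(\ref{eq:X0}) for $X_0$, and treats all $\eta$ with $\eta(b_1)=-1$ uniformly; your direct computation is more elementary and has the added benefit of producing explicit cocycle representatives on the resolution $F$, which is what one needs anyway to compute cup products later. Two small points you should make explicit if you write this up: the formula~(\ref{eq:X0}) is only stated under the normalization $r>0$, $t<0$, so you should invoke the paper's remark that this is no loss of generality; and the map $\zz G\to\zz$ you apply to the matrix entries is the ring homomorphism induced by $\eta$ (the ``twisted augmentation''), not the ordinary augmentation.
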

\begin{proof}
We could compute the cohomology groups using the projective resolution
of Theorem~\ref{theorem:Gresolution}, but a spectral sequence argument
readily computes them when the action $\eta$ is such that $\eta(b_1) =
-1$. Let $N$ and $Q$ be the subgroups of $G$ mentioned in
Section~\ref{section:G}. As a $N$-module, $\zz_\eta$ is such that
$x\cdot k = k$ and $y\cdot k = -k$ for all $k\in\zz_\eta$. A direct
calculation using, for example, the free resolution of $\zz$ over $\zz
N$ found in~\cite{MartinsGoncalves} implies $H^0(N;\zz_\eta) = 0$,
$H^1(N;\zz_\eta) = \zz_2$ and $H^2(N;\zz_\eta) = \zz_2$. Hence the
Lyndon-Hochschild-Serre spectral sequence associated with the
extension
\[
\begin{tikzcd}
N \arrow[tail]{r} & G \arrow[twoheadrightarrow]{r} & Q
\end{tikzcd}
\]
is such that $E_2^{p,q} = H^p(Q; H^q(N;\zz_\eta))$ is zero for $q=0$
and, if $q\in \{1,2\}$,
\[
E_2^{p,q} = \begin{cases}
\zz_2, & \text{if $p=0$}, \\
\zz_2\oplus\zz_2, & \text{if $p\ge 1$}.
\end{cases}
\]
Let's plot the lines corresponding to $q=1$ and $q=2$ of the term
$E_2$:
\[
\begin{tikzcd}[column sep=1.5em]
(q=2) & \zz_2 \arrow[hook]{drr}[swap]{d_2^{0,2}}& \zz_2\oplus\zz_2  \arrow{drr}[description]{\cong} & \zz_2\oplus\zz_2 \arrow{drr}[description]{\cong} & \zz_2\oplus\zz_2 & \zz_2\oplus\zz_2 & \cdots\\
(q=1) & \zz_2 & \zz_2\oplus\zz_2 & \zz_2\oplus\zz_2 & \zz_2\oplus\zz_2 & \zz_2\oplus\zz_2 & \cdots
\end{tikzcd}
\]
Since $E_3 = E_\infty$ and $H^n(G;\zz_\eta) = 0$ for $n\ge 4$, the
maps $d_2^{p,2}\colon E_2^{p,2} \to E_2^{p+2,1}$ must be isomorphisms
for $p\ge 1$. Also, using Poincar\'e duality, we see that
$H^3(G;\zz_\eta) = H_0(G;\zz_\eta) \cong \zz_2$, since the action of
$b_1\in G$ on $\zz_\eta$ is non-trivial. Therefore the map $d_2^{0,2}$
must be injective and the theorem follows.
\end{proof}

The actions $\eta\colon G\to \Aut(\zz)$ that remain satisfy $\eta(b_1)
= 1$. Hence, as we've already computed the cohomology groups of $G$
with trivial $\zz$ coefficients, we are left with three actions to
consider. For all non-trivial actions $\eta\colon G\to \Aut(\zz)$, we
have $H^0(G;\zz_\eta) = 0$ and $H^3(G;\zz_\eta) \cong H_0(G;\zz_\eta)
\cong \zz_2$. Also, the groups $H^1(G;\zz_\eta)$ and $H^2(G;\zz_\eta)$
can be readily computed from the resolution of
Theorem~\ref{theorem:Gresolution} and we get the following result:

\begin{theorem}\label{theorem:cohomologygroupsb11}
If $\eta_1\colon G\to \Aut(\zz)$ is the action given by $\eta_1(a_1) =
\eta_1(b_1) = 1$, $\eta_1(a_2) = -1$, we get $H^1(G;\zz_{\eta_1}) = \zz_2$
and
\[
H^2(G;\zz_{\eta_1}) = \begin{cases}
\zz_{2r}\oplus\zz_2, & \text{is $s$ is even},\\
\zz_{4r}, & \text{if $s$ is odd}.
\end{cases}
\]
For the action $\eta_2\colon G\to \Aut(\zz)$ defined by $\eta_2(a_1) =
-1$, $\eta_2(b_1) = \eta_2(a_2) = 1$, we get $H^1(G;\zz_{\eta_2}) = \zz_2$
and
\[
H^2(G;\zz_{\eta_2}) = \begin{cases}
\zz_{2u}\oplus\zz_2, & \text{is $s$ is even},\\
\zz_{4u}, & \text{if $s$ is odd}.
\end{cases}
\]
Finally, if $\eta_3\colon G\to \Aut(\zz)$ is the action given by
$\eta_3(a_1) = -1$, $\eta_3(b_1) =1$, $\eta_3(a_2) = -1$, we get
$H^1(G;\zz_{\eta_3}) = \zz\oplus\zz_2$ and $H^2(G;\zz_{\eta_3}) =
\zz\oplus\zz_s$.
\end{theorem}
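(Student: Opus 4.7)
The plan is to apply $\Hom_{\zz G}(-,\zz_{\eta_i})$ to the free resolution of Theorem~\ref{theorem:Gresolution}, yielding the cochain complex
\[
\zz\xrightarrow{\delta_0}\zz^3\xrightarrow{\delta_1}\zz^3\xrightarrow{\delta_2}\zz,
\]
whose three differentials are integer matrices obtained by replacing each entry of $d_1$, $d_2$ and $d_3$ by its image under the ring homomorphism $\varepsilon_{\eta_i}\colon\zz G\to\zz$ extending $g\mapsto\eta_i(g)$. I would handle all three cases uniformly, using the identity $\varepsilon_{\eta_i}(\partial g^n/\partial g)=\sum_{k=0}^{n-1}\eta_i(g)^k$, which equals $n$ when $\eta_i(g)=1$ and vanishes when $\eta_i(g)=-1$ and $n$ is even. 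Since $\eta_i(b_1)=1$ in all three cases and every $a_1$-power appearing in the matrix of $d_2$ has even exponent, a short bookkeeping produces $\delta_1$ explicitly: for $\eta_1$ only the entries $2,\,2r,\,s$ are nonzero; for $\eta_2$ only $s,\,-2,\,2u$; and for $\eta_3$ only the single entry $s$.

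For $\delta_2$ the computation reduces to evaluating $\varepsilon_{\eta_i}$ on the three coefficients of $d_3(1)=X_0\rho_1+Y_0\rho_2+Z_0\rho_3$. The values on $Y_0=1-a_1^{2t}b_1^u$ and $Z_0=a_2-a_1^{2t}b_1^u$ are immediate. Although the formula~(\ref{eq:X0}) for $X_0$ looks formidable, under each $\varepsilon_{\eta_i}$ the four sums collapse since every $b_1$-derivative contributes $\pm s$ or $\pm u$ and every $a_1$-power contributes $\pm 1$. A direct tally gives $\varepsilon_{\eta_1}(X_0)=0$ and $\varepsilon_{\eta_2}(X_0)=\varepsilon_{\eta_3}(X_0)=2(ru-st)=\pm 2$, the sign coming from the identity $(ru-st)^2=\det\theta(1)=1$ recorded in Section~\ref{section:G} and being irrelevant to the final answer. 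This bookkeeping for $X_0$ is the step I expect to be the main obstacle, but the simplifications just mentioned should dissolve it.

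With the three matrices in hand, $H^1=\ker\delta_1/\im\delta_0$ and $H^2=\ker\delta_2/\im\delta_1$ reduce to Smith normal form computations on small integer matrices. For $\eta_1$ and $\eta_2$, $H^2$ appears as the cokernel of a $2\times 2$ matrix of determinant $\pm 4r$ or $\pm 4u$ whose invariant factors split according to the parity of $s$: when $s$ is even the $\gcd$ of the entries is $2$, yielding invariant factors $(2,2r)$ or $(2,2u)$ and hence $\zz_{2r}\oplus\zz_2$ or $\zz_{2u}\oplus\zz_2$; when $s$ is odd the $\gcd$ is $1$, yielding $(1,4r)$ or $(1,4u)$ and hence $\zz_{4r}$ or $\zz_{4u}$. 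For $\eta_3$ the matrix $\delta_1$ has a single nonzero entry $s$, so $\ker\delta_1\cong\zz^2$ and $\im\delta_0$ is the rank-one subgroup spanned by $(-2,0,-2)$, giving $H^1\cong\zz\oplus\zz_2$; the parallel calculation for $H^2$ yields $\zz\oplus\zz_s$.
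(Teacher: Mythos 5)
Your proposal is correct and is exactly the computation the paper intends: the paper's ``proof'' of this theorem is the single remark that the groups ``can be readily computed from the resolution of Theorem~\ref{theorem:Gresolution},'' and your explicit evaluation of the induced integer matrices (including the collapse of $X_0$ to $0$ or $2(ru-st)=\pm2$ under the various $\eta_i$, and the Smith normal form analysis split on the parity of $s$) fills in precisely those details and reproduces the stated answers.
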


\begin{example}
Let's calculate the cup products
\[
H^p(G;\zz_{\eta_1})\otimes H^q(G;\zz_{\eta_2}) \stackrel{\smile}{\to} H^{p+q}(G;\zz_{\eta_1}\otimes\zz_{\eta_2}) \cong H^{p+q}(G;\zz_{\eta_3}).
\]
Since the cohomology groups depend on the parity of $s$, let's assume
that $s$ is odd. In this case, we have $H^1(G;\zz_{\eta_1})\cong
H^1(G;\zz_{\eta_2}) \cong \zz_2$ and $H^2(G;\zz_{\eta_3}) =
\zz\oplus\zz_s$, so the product
\[
H^1(G;\zz_{\eta_1})\otimes H^1(G;\zz_{\eta_2}) \stackrel{\smile}{\to} H^2(G;\zz_{\eta_3})
\]
is zero. In order to compute $H^1(G;\zz_{\eta_1})\otimes
H^2(G;\zz_{\eta_2}) \stackrel{\smile}{\to} H^3(G;\zz_{\eta_3})$, we
use~\eqref{eq:cupcap} with $A=\zz_{\eta_1}$ and $B=\zz_{\eta_2}$:
\[
\begin{tikzcd}
H^1(G;\zz_{\eta_1})\otimes H^2(G;\zz_{\eta_2}) \arrow{r}{\smile} \arrow{d}{\cong}[swap]{1\otimes(\underline{\phantom{M}}\frown z)} & H^3(G;\zz_{\eta_3}) \arrow{d}{\underline{\phantom{M}}\frown z}[swap]{\cong} \\
H^1(G;\zz_{\eta_1})\otimes H_1(G;\zz_{\eta_2}) \arrow{r}{\frown} & H_0(G;\zz_{\eta_3})
\end{tikzcd}
\]

One generator of $H^1(G;\zz_{\eta_1}) \cong \zz_2$ is the class of the
map $\alpha_2^*\in \Hom_G(F_1,\zz_{\eta_1})$ defined by
$\alpha_2^*(\alpha_1) = 0$, $\alpha_2^*\beta_1) = 0$ and
$\alpha_2^*(\alpha_2) = 1$, and one generator of $H_1(G;\zz_{\eta_2})
\cong \zz_{4u}$ is the class of $(\beta_1\otimes
1)$. Using~\eqref{eq:gamma}, we get
\[
[\varphi]\frown[\beta_1\otimes 1] = 0,
\]
hence the product $H^1(G;\zz_{\eta_1})\otimes H^2(G;\zz_{\eta_2})
\stackrel{\smile}{\to} H^3(G;\zz_{\eta_3})$ is also null.
\end{example}

\begin{example}
Assuming $s$ odd, let's compute the cup products
\[
H^p(G;\zz_{\eta_3})\otimes H^q(G;\zz_{\eta_3}) \stackrel{\smile}{\to}
H^{p+q}(G;\zz_{\eta_3}\otimes\zz_{\eta_3}) \cong H^{p+q}(G;\zz).
\]
In order to do that, we need representatives for the generating
classes of the cohomology groups $H^1(G;\zz_{\eta_3})$,
$H^2(G;\zz_{\eta_3})$ and $H^2(G;\zz)$, which can be obtained using
the free resolution $F$. The group $H^1(G;\zz_{\eta_3})\cong
\zz\oplus\zz_2$ is generated by $[\alpha_2^*]$ and
$[\alpha_1^*+\alpha_2^*]$, where $[\alpha_2^*]$ generates a subgroup
isomorphic to $\zz$ and $[\alpha_1^*+\alpha_2^*]$ generates a subgroup
isomorphic to $\zz_2$. The group $H^2(G;\zz_{\eta_3})\cong
\zz\oplus\zz_s$ is generated by $[\rho_1^*+\rho_3^*]$ and
$[\rho_2^*]$, where $[\rho_1^*+\rho_3^*]$ generates a subgroup
isomorphic to $\zz$ and $[\rho_2^*]$ generates a subgroup isomorphic
to $\zz_s$. The group $H^2(G;\zz) = \zz_{4t}\oplus\zz_4$ is generated
by $[\rho_3^*]$ and $[\rho_1^*+u\rho_3^*]$, where $[\rho_3^*]$
generates a subgroup isomorphic to $\zz_{4t}$ and
$[\rho_1^*+u\rho_3^*]$ generates a subgroup isomorphic to $\zz_4$. We
also note that, in $H^2(G;\zz)$, $[2\rho_1^*+\rho_2^*+2u\rho_3^*] =
0$.

Using the map $\Delta_{1,1}$ of Theorem~\ref{theorem:Gdiagonal}, we
determine the cup products $H^1(G;\zz_{\eta_3})\otimes
H^1(G;\zz_{\eta_3}) \stackrel{\smile}{\to} H^2(G;\zz)$:
\begin{align*}
&[\alpha_2^*]^2 = 2[\rho_1^*+u\rho_3^*], \\
&[\alpha_2^*]\smile[\alpha_1^*+\alpha_2^*] = 2t[\rho_3^*]+2[\rho_1^*+u\rho_3^*], \\
&[\alpha_1^*+\alpha_2^*]^2 = 2t[\rho_3^*]-2(r-1)[\rho_1^*+u\rho_3^*].
\end{align*}

Now let's calculate the products $H^1(G;\zz_{\eta_3})\otimes
H^2(G;\zz_{\eta_3}) \stackrel{\smile}{\to} H^3(G;\zz)$. Since
$H^3(G;\zz) = \zz$ is torsion-free, we have
\begin{align*}
&[\alpha_1^*+\alpha_2^*]\smile [\rho_1^*+\rho_3^*] = 0,\\
&[\alpha_1^*+\alpha_2^*]\smile [\rho_2^*] = 0,\\
&[\alpha_2^*]\smile [\rho_2^*] = 0.\\
\end{align*}
It remains to compute $[\alpha_2^*]\smile[\rho_1^*+\rho_3^*]$. The
isomorphism $\varphi_2$ of~\eqref{eq:isoz} maps $[\rho_1^*+\rho_3^*]$ to
\[
\xi=[(X_{a_1}\alpha_1 + X_{b_1}\beta_1 + X_{a_2}\alpha_2)\otimes 1 + (Z_{a_1}\alpha_1 + Z_{b_1}\beta_1 + Z_{a_2}\alpha_2)\otimes 1],
\]
according to~\eqref{eq:pi12}. But the expression for $X_0$
of~\eqref{eq:X0} shows that $X_{a_2}=0$. We also have
\begin{align*}
\psi(Z_0) &= (a_2^{-1}-1) - (x^{-t}y^{-u}-1) \\
&= -\frac{\partial a_1^{-2t}}{\partial a_1}(a_1-1) -a_1^{-2t}\frac{\partial b_1^{-u}}{\partial b_1}(b_1-1) - a_2^{-1}(a_2-1),
\end{align*}
hence $Z_{a_2} = -a_2^{-1}$. Now~\eqref{eq:capexplicit} shows that
$[\alpha_2^*]\frown \xi$ is a generator of $H_0(G;\zz)$, hence
$[\alpha_2^*]\smile [\rho_1^*+\rho_3^*]$ generates $H^3(G;\zz)$.
\end{example}

Now we determine $H^*(G;\zz_p)$ when $p$ is an odd prime. The case
$p=2$ has been solved by Hillman~\cite{Hillman}, but could also be
recovered from our techniques.

\begin{theorem}\label{theorem:modpcohomology}
The cohomology groups $H^*(G;\zz_p)$, for an odd prime $p$, are given
by
\begin{align*}
H^0(G;\zz_p) &\cong \zz_p, \\
H^1(G;\zz_p) &\cong \begin{cases}
\zz_p, & \text{ if $p\mid t$}, \\
0, & \text{ if $p\nmid t$},
\end{cases} \\
H^2(G;\zz_p) &\cong \begin{cases}
\zz_p, & \text{ if $p\mid t$}, \\
0, & \text{ if $p\nmid t$},
\end{cases} \\
H^3(G;\zz_p) &\cong \zz_p.
\end{align*}
Moreover, if $p\mid t$, there is a generator $\alpha\in H^1(G;\zz_p)$
and a generator $\beta\in H^2(G;\zz_p)$ such that $\alpha\smile\beta$
is a generator of $H^3(G;\zz_p)$, so
\[
H^*(G;\zz_p) \cong \frac{\zz_p[\alpha,\beta]}{(\alpha^2=0, \beta^2=0)},
\]
where $\dim(\alpha)=1$ and $\dim(\beta)=2$.
\end{theorem}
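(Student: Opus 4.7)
The plan is to first compute the groups $H^n(G;\zz_p)$ by the universal coefficient theorem starting from Theorem~\ref{theorem:integralcohomology}, and then to read off the ring structure from the fact that $G$ is an orientable Poincar\'e duality group of dimension $3$.

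For the additive statement, I would use the short exact sequence
\[
0 \to \Ext(H_{n-1}(G;\zz),\zz_p) \to H^n(G;\zz_p) \to \Hom(H_n(G;\zz),\zz_p) \to 0.
\]
Combining Poincar\'e duality $H_q(G;\zz) \cong H^{3-q}(G;\zz)$ with Theorem~\ref{theorem:integralcohomology} identifies $H_0(G;\zz)=\zz$, $H_1(G;\zz)$ with $\zz_{4t}\oplus\zz_2\oplus\zz_2$ or $\zz_{4t}\oplus\zz_4$, $H_2(G;\zz)=0$, and $H_3(G;\zz)=\zz$. Since $p$ is odd, both $\Hom(-,\zz_p)$ and $\Ext(-,\zz_p)$ annihilate the $\zz_2$ and $\zz_4$ summands and extract a $\zz_p$ from $\zz_{4t}$ precisely when $p\mid t$; plugging this into the universal coefficient sequence yields the stated values of $H^0,\ldots,H^3$.

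For the ring structure when $p\mid t$, two of the three potentially nonzero products vanish for soft reasons: graded commutativity gives $2\alpha^2=0$, hence $\alpha^2=0$ since $2$ is invertible in $\zz_p$; and $\beta^2 \in H^4(G;\zz_p)=0$ because $\cd(G)=3$. So the only remaining content of the theorem is that $\alpha\smile\beta$ generates $H^3(G;\zz_p) \cong \zz_p$.

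To settle this last step, I would appeal to the fact that $X = K(G,1)$ is a closed orientable $3$-manifold, so $G$ is an orientable Poincar\'e duality group of dimension $3$. Over the field $\zz_p$ this forces the cup product pairing
\[
H^1(G;\zz_p)\otimes H^2(G;\zz_p) \to H^3(G;\zz_p) \cong \zz_p
\]
to be a perfect pairing of one-dimensional $\zz_p$-vector spaces, and in particular the product of any two generators is again a generator, giving the claimed ring presentation. The main obstacle I anticipate is simply confirming that the Poincar\'e duality pairing is the usual cup product pairing in the group-cohomological setting with field coefficients; if one preferred to avoid this appeal, the alternative is a direct computation of $\alpha \smile \beta$ via $\Delta_{1,1}$ from Theorem~\ref{theorem:Gdiagonal}, combined with the identification~\eqref{eq:isoz} and the explicit $\pi_{1,2}(\zeta)$ from~\eqref{eq:pi12}, which is viable but noticeably longer than the duality argument.
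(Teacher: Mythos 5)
Your proposal is correct, and the additive part (universal coefficients combined with Poincar\'e duality applied to Theorem~\ref{theorem:integralcohomology}, with the observation that an odd prime $p$ kills the $2$-torsion and sees $\zz_{4t}$ exactly when $p\mid t$) is essentially the paper's own argument, as are the soft observations that $\alpha^2=0$ by graded commutativity and $\beta^2=0$ since $\cd(G)=3$. Where you genuinely diverge is the one substantive multiplicative claim, that $\alpha\smile\beta$ generates $H^3(G;\zz_p)$. You invoke the nondegeneracy of the cup product pairing $H^1\otimes H^2\to H^3\cong\zz_p$ over the field $\zz_p$ for the closed orientable aspherical $3$-manifold $X=K(G,1)$; since both groups are one-dimensional, nondegeneracy immediately forces the product of generators to be a generator. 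This is a legitimate and noticeably shorter argument, and the identification you worry about (group cohomology of $G$ versus singular cohomology of $X$, compatibly with cup products) is automatic because $X$ is aspherical. The paper instead runs the computation through its own machinery: it exhibits explicit generators $v=[\alpha_1^*+r\alpha_2^*]$ of $H^1(G;\zz_p)$ and $w=[\alpha_2\otimes 1]$ of $H_1(G;\zz_p)$ from the resolution of Theorem~\ref{theorem:Gresolution}, evaluates the cap product via~\eqref{eq:gamma}, and uses $p\nmid r$ (which holds since $ru+st$ has absolute value at least $3$ and $(ru-st)^2=1$ force $r,u$ coprime to $t$ in the relevant sense --- more simply, $rstu\ne 0$ and $p\mid t$ together with $ru-st=\pm1$ give $p\nmid r$) to conclude that $v\frown w$ generates $H_0(G;\zz_p)$, then sets $\beta=\varphi_2^{-1}(w)$ via~\eqref{eq:isoz} and~\eqref{eq:cupcap}. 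What the duality argument buys is brevity; what the paper's explicit route buys is concrete cocycle representatives for $\alpha$ and $\beta$ and a template that still works for the twisted-coefficient computations in the surrounding examples, where one cannot simply quote the standard field-coefficient nondegeneracy statement. Either way your argument is complete as a proof of this particular theorem.
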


\begin{proof}
The computation of $H^0(G;\zz_p)$ is immediate.  Using
Theorem~\ref{theorem:integralcohomology} and the universal coefficient
theorems for homology and cohomology, we can compute the other groups
$H^*(G;\zz_p)$: from the short exact sequence
\[
\begin{tikzcd}[column sep=1.5em]
0 \arrow{r} & H_1(G;\zz)\otimes \zz_p \arrow{r} & H_1(G;\zz_p) \arrow{r} & \underbrace{\Tor(\zz,\zz_p)}_{=0} \arrow{r} & 0,
\end{tikzcd}
\]
we get $H^2(G;\zz_p) \cong H_1(G;\zz_p) \cong H_1(G;\zz)\otimes \zz_p
\cong \zz_{\gcd(p,t)}$ by Poincar\'e duality, and from
\[
\begin{tikzcd}[column sep=1.5em]
0 \arrow{r} & \underbrace{\Ext(H_0(G;\zz),\zz_p)}_{=0} \arrow{r} & H^1(G;\zz_p) \arrow{r} & \Hom(H_1(G;\zz),\zz_p) \arrow{r} & 0
\end{tikzcd}
\]
we get $H^1(G;\zz_p) \cong \zz_{\gcd(p,t)}$. Finally, Poincar\'e
duality also gives us $H^3(G;\zz_p) \cong H_0(G;\zz_p) \cong
\zz_p$. Thus, if $p\nmid t$, there is no cup product to consider and
we are done.

Suppose then that $p\mid t$. From Theorem~\ref{theorem:Gresolution},
it is easy to check that $v=[\alpha_1^*+r\alpha_2^*]$ generates
$H^1(G;\zz_p)\cong\zz_p$ and $w=[\alpha_2\otimes 1]$ generates
$H_1(G;\zz_p)\cong\zz_p$. But now~\eqref{eq:gamma} and the fact that
$p\nmid r$ imply that $v\frown w$ generates $H_0(G;\zz_p)$. We get the
statement of the theorem defining $\alpha=v$, $\beta =
\varphi_2^{-1}(w)$ and observing that $\alpha^2=0$ since $p$ is odd.
\end{proof}

{\small
\bibliographystyle{amsalpha}
\bibliography{sapphires}
}

\end{document}